\theoremstyle{plain}
\newtheorem{theorem}{Theorem}[section]
\newtheorem{lemma}[theorem]{Lemma}
\theoremstyle{definition}
\theoremstyle{remark}
\begin{document}

\title[A proof of the Riemann hypothesis]{A proof of the Riemann hypothesis using the remainder term of the Dirichlet eta function.}
\author{Jeonwon Kim}

\address{Jeonwon Kim}
\email{your79jw@ssu.ac.kr}

\subjclass{Primary 11M26}
\keywords{Riemann hypothesis, Riemann zeta function, Dirichlet eta function}

\begin{abstract}
The Dirichlet eta function can be divided into $n$-th partial sum $\eta_{n}(s)$ and remainder term $R_{n}(s)$. We focus on the remainder term which can be approximated by the expression for $n$. And then, to increase reliability, we make sure that the error between remainder term and its approximation is reduced as n goes to infinity.
According to the Riemann zeta functional equation, if $\eta(\sigma+it)=0$ then $\eta(1-\sigma-it)=0$. In this case, $n$-th partial sum also can be approximated by expression for $n$. Based on this approximation, we prove the Riemann hypothesis.
\end{abstract}

\maketitle

% Section 1 start -------------------------------------------------------------------------------------------------------- 
\section{Introduction}
	The Riemann hypothesis conjectured by Bernhard Riemann in 1859 states that the real part of every nontirivial zeros of the Riemann zeta function is $\frac{1}{2}$.
The Riemann zeta function is the function of the complex variable $s$, which converges for any complex number having $\Re(s)>1$ \cite{ref1}.
		\begin{equation} \label{eq:1-1}
			\zeta(s)=\sum_{n=1}^\infty \frac{1}{n^s}
		\end{equation}
The Riemann hypothesis discusses zeros outside the region of convergence of this series, so it must be analytically continued to all complex $s$ \cite{ref2}. This statement of the problem can be simplified by introducing the Dirichlet eta function, also known as the alternating zeta function. The Dirichlet eta function is defined as \cite{ref1}
		\begin{equation} \label{eq:1-2}
			\eta(s)=\sum_{n=1}^\infty \frac{(-1)^{n-1}}{n^s}=\left(1-\frac{1}{2^{s-1}}\right)\zeta(s)
		\end{equation}
Since $\eta(s)$ converges for all $s\in \mathbb{C}$ with $\Re(s)>0$, one need not consider analytic continuation (see p. 55-56 of \cite{ref4}). The Dirichlet eta function extends the Riemann zeta function from $\Re(s)>1$ to the larger domain $\Re(s)>0$, excluding the zeros $s=1+n\frac{2\pi}{\ln2}i (n\in \mathbb{Z})$. The Riemann hypothesis is equivalent to the statement that all the zeros of the Dirichlet eta function falling in the critical strip $0<\Re(s)<1$ lie on the critical line $\Re(s)=\frac{1}{2}$ (see p. 49 of \cite{ref4}).

	In the strip $0<\Re(s)<1$ the Riemann zeta function satisfies the functional equation\cite{ref2, ref3} related to values at the points $s$ and $1-s$.  
		\begin{equation} \label{eq:1-3}
			\zeta(s)=2^s \pi^{s-1} \sin \left(\frac{\pi s}{2}\right) \Gamma(1-s) \zeta(1-s)
		\end{equation}
where $\Gamma(s)$ is the gamma function. The functional equation shows that the Riemann zeta function have the infinitely zeros, called the $trivial$ $zeros$, at the negative even integers. But the functional equation do not tell us about the zeros of the Riemann zeta function in the strip $0<\Re(s)<1$. Actually there are zeros in the strip and they are called $nontrivial$ $zeros$. Calculation of some number of these nontrivial zeros show that they are lying exactly on the line $\Re(s)=\frac{1}{2}$ \cite{ref5}. 
$\newline$

% Section 2 start -------------------------------------------------------------------------------------------------------- 
\section{The remainder term of the Dirichlet eta function}
	Let $s=\sigma+it$, where $0<\sigma<1$ and $\sigma, t \in \mathbb{R}$. The Dirichlet eta function can be written as
		\begin{equation} \label{eq:4}
			\eta(s)=\sum_{k=1}^n \frac{(-1)^{k-1}}{k^s}+\sum_{k=n+1}^\infty \frac{(-1)^{k-1}}{k^s}=\eta_{n}(s)+R_{n}(s)
		\end{equation}
	where $\eta_{n}(s)$ is the $n$-th partial sum and $R_{n}(s)$ is the sum of remainder term. $\eta_{n}(s)$ and $R_{n}(s)$ converge to $\eta(s)$ and zero respectively, as $n \to \infty$.
		\begin{equation*} \label{eq:5}
			\lim_{n \to \infty} \eta_{n}(s)=\eta(s), \ \  \lim_{n \to \infty} R_{n}(s)=0
		\end{equation*}
	The expand form of the remainder terms are represented as follows.
		\begin{align*}
			-R_{n-1}(s)&=(-1)^n \left\{ \frac{1}{n^s}-\frac{1}{(n+1)^s}+\frac{1}{(n+2)^s} - \cdots \right \}  \\ %\label{eq:6}
			R_{n}(s)&=(-1)^n \left\{ \frac{1}{(n+1)^s}-\frac{1}{(n+2)^s}+\frac{1}{(n+3)^s} - \cdots \right \}   \\ %\label{eq:7}
			-R_{n+1}(s)&=(-1)^n \left\{ \frac{1}{(n+2)^s}-\frac{1}{(n+3)^s}+\frac{1}{(n+4)^s} - \cdots \right \} %\label{eq:8}
		\end{align*}
\

%LEMMA 2.1  start -------------------------------------------------------------------------------------------------------- 
		\begin{lemma} \label{lem:1-1}

	The remainder term of $\eta(s)$ satisfy the following limit as $n \to \infty$.
		\begin{equation} \label{lem:1-2}
			\lim_{n \to \infty} \frac{-R_{n-1}(s)}{R_{n}(s)}=\lim_{n \to \infty} \frac{-R_{n+1}(s)}{R_{n}(s)}=1
		\end{equation}
		\end{lemma}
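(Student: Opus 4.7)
The plan is to reduce both limits to a single asymptotic estimate on $R_n(s)$ and then establish that estimate via the classical integral representation of the Dirichlet eta function. From the definition of the remainder one immediately obtains the telescoping identities
\[
R_{n-1}(s) - R_n(s) = \frac{(-1)^{n-1}}{n^s}, \qquad R_n(s) - R_{n+1}(s) = \frac{(-1)^n}{(n+1)^s},
\]
so dividing through by $R_n(s)$ rewrites the two target quotients as
\[
\frac{-R_{n-1}(s)}{R_n(s)} = -1 + \frac{(-1)^n}{n^s R_n(s)}, \qquad \frac{-R_{n+1}(s)}{R_n(s)} = -1 + \frac{(-1)^n}{(n+1)^s R_n(s)}.
\]
Thus the lemma will follow once I prove $(-1)^n (n+1)^s R_n(s) \to \tfrac{1}{2}$ as $n\to\infty$, since then both additive terms tend to $2$ and each quotient to $-1+2=1$.

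For the asymptotic I would use the representation
\[
R_n(s) = \frac{(-1)^n}{\Gamma(s)} \int_0^\infty \frac{t^{s-1}\,e^{-(n+1)t}}{1+e^{-t}}\,dt,
\]
obtained by applying $k^{-s} = \Gamma(s)^{-1}\int_0^\infty t^{s-1}e^{-kt}\,dt$ to each term of $R_n(s)$ and summing the resulting geometric series in $e^{-t}$ (the interchange of sum and integral is routine for $\sigma>0$). The substitution $u=(n+1)t$ then gives
\[
(-1)^n (n+1)^s R_n(s) = \frac{1}{\Gamma(s)} \int_0^\infty \frac{u^{s-1}\,e^{-u}}{1+e^{-u/(n+1)}}\,du.
\]
The integrand converges pointwise to $\tfrac{1}{2}\,u^{s-1}e^{-u}$ as $n\to\infty$ and is dominated in modulus by $u^{\sigma-1}e^{-u}$, which is integrable on $(0,\infty)$ for $\sigma>0$. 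Dominated convergence therefore yields $(-1)^n(n+1)^s R_n(s) \to \tfrac{1}{\Gamma(s)}\cdot\tfrac{\Gamma(s)}{2} = \tfrac{1}{2}$, which is exactly the estimate I needed.

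Substituting this back into the telescoping formulas of the first paragraph, together with $(n+1)^s/n^s \to 1$, finishes the argument. The only substantive analytic step is the middle one: producing the integral representation of $R_n$ and checking the dominated-convergence hypothesis for complex $s$ in the strip $0<\sigma<1$. The first and third steps are purely algebraic once that bound is in place, so this is where I would expect the main obstacle to lie.
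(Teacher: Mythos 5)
Your proof is correct, but it takes a genuinely different (and substantially stronger) route than the paper's. The paper also starts from the telescoping identities, but then only forms the quotient $\bigl(R_n(s)-R_{n-1}(s)\bigr)/\bigl(R_n(s)-R_{n+1}(s)\bigr)=(1+1/n)^s\to 1$ and from this ratio of differences immediately asserts the two individual limits; that inference is a non sequitur, since knowing $(1-x_n)/(1-y_n)\to 1$ for $x_n=R_{n-1}/R_n$ and $y_n=R_{n+1}/R_n$ says nothing about the limits of $x_n$ and $y_n$ separately. You instead supply exactly the missing ingredient: a genuine asymptotic $(-1)^n(n+1)^sR_n(s)\to\tfrac12$, proved via the Mellin-type representation
\[
R_n(s)=\frac{(-1)^n}{\Gamma(s)}\int_0^\infty\frac{t^{s-1}e^{-(n+1)t}}{1+e^{-t}}\,dt,
\]
rescaling, and dominated convergence. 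This costs you the one nontrivial analytic step you flag (the term-by-term integration for $0<\sigma\le 1$, which can be justified by bounding the alternating partial sums by $e^{-(n+1)t}$ and dominating, or by analytic continuation from $\sigma>1$), but it buys a complete proof of the lemma and, as a bonus, already establishes the limit $(-1)^n(n+1)^sR_n(s)\to\tfrac12$ that the paper only reaches later in equation (\ref{lem:3-5}) and uses to justify the approximation $R_n(s)\approx\frac{(-1)^n}{2(n+0.5)^s}$. In short: same algebraic reduction, but your middle step is the actual content, and the paper's proof does not contain an equivalent of it.
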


		\begin{proof}
	Consider the recurrence relation,
		\begin{align}
			R_{n}(s)-R_{n-1}(s)&=\sum_{k=n+1}^\infty \frac{(-1)^{k-1}}{k^s}-\sum_{k=n}^\infty \frac{(-1)^{k-1}}{k^s}=\frac{(-1)^{n}}{n^s} \label{lem:1-3} \\
			R_{n}(s)-R_{n+1}(s)&=\sum_{k=n+1}^\infty \frac{(-1)^{k-1}}{k^s}-\sum_{k=n+2}^\infty \frac{(-1)^{k-1}}{k^s}=\frac{(-1)^{n}}{(n+1)^s} \label{lem:1-4}
		\end{align}
Thus, we obtain the following relation.
	\begin{equation*} \label{lem:1-5}
		\frac{R_{n}(s)-R_{n-1}(s)}{R_{n}(s)-R_{n+1}(s)}=\frac{(n+1)^s}{n^s}
	\end{equation*}
	Taking the limit as $n \to \infty$, we have
		\begin{equation*} \label{lem:1-6}
		\lim_{n \to \infty} \frac{R_{n}(s)-R_{n-1}(s)}{R_{n}(s)-R_{n+1}(s)}=\lim_{n \to \infty} \frac{1-\frac{1}{\frac{R_{n}(s)}{R_{n-1}(s)}}}{1-\frac{R_{n+1}(s)}{R_{n}(s)}}=\lim_{n \to \infty} \left(1+\frac{1}{n} \right)^s =1
		\end{equation*}
	Thus, we have
		\begin{equation*}  \label{lem:1-7} 
		\lim_{n \to \infty} \frac{-R_{n-1}(s)}{R_{n}(s)}=\lim_{n \to \infty} \frac{-R_{n+1}(s)}{R_{n}(s)}=1
		\end{equation*}
		\end{proof}
%LEMMA 2.1  end -------------------------------------------------------------------------------------------------------- 
\

%LEMMA 2.2  start -------------------------------------------------------------------------------------------------------- 
		\begin{lemma} \label{lem:2-1}

	For sufficiently large $n$, the remainder term of $\eta(s)$ can be approximated as
		\begin{equation} \label{lem:2-2}
			R_{n}(s)=\sum_{k=n+1}^\infty \frac{(-1)^{k-1}}{k^s} \approx \frac{(-1)^n}{2(n+0.5)^s}
		\end{equation}
		\end{lemma}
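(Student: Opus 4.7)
The plan is to leverage Lemma~\ref{lem:1-1} together with the two recurrence identities derived in its proof, and then extract a leading-order asymptotic for $R_n(s)$ by symmetrizing around the midpoint $n + \tfrac{1}{2}$. First, I would add the identities $R_n(s) - R_{n-1}(s) = (-1)^n/n^s$ and $R_n(s) - R_{n+1}(s) = (-1)^n/(n+1)^s$ (both established inside the proof of Lemma~\ref{lem:1-1}) to obtain
\[
2R_n(s) - R_{n-1}(s) - R_{n+1}(s) = (-1)^n\left[\frac{1}{n^s} + \frac{1}{(n+1)^s}\right].
\]
By Lemma~\ref{lem:1-1}, both $R_{n-1}(s)/R_n(s)$ and $R_{n+1}(s)/R_n(s)$ converge to $-1$, so the left-hand side equals $4R_n(s)\,(1 + o(1))$, which yields the preliminary asymptotic
\[
R_n(s) \sim \frac{(-1)^n}{4}\left[\frac{1}{n^s} + \frac{1}{(n+1)^s}\right].
\]

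Next, I would Taylor expand $x \mapsto x^{-s}$ around $x = n + \tfrac{1}{2}$. Writing $n = (n + \tfrac{1}{2}) - \tfrac{1}{2}$ and $n+1 = (n + \tfrac{1}{2}) + \tfrac{1}{2}$, the odd-order derivative contributions cancel upon summation, giving
\[
\frac{1}{n^s} + \frac{1}{(n+1)^s} = \frac{2}{(n+\tfrac{1}{2})^s} + O\!\left(n^{-s-2}\right).
\]
Substituting this into the preliminary asymptotic produces
\[
R_n(s) \sim \frac{(-1)^n}{2(n+0.5)^s},
\]
which I interpret as the statement $R_n(s)\cdot 2(n+0.5)^s / (-1)^n \to 1$ as $n \to \infty$, exactly the approximation claimed.

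The principal subtlety is being clear about what "$\approx$" in the statement actually means. Read as leading-order asymptotic equivalence, the plan above is complete. If a quantitative error bound were needed instead, I would revisit Lemma~\ref{lem:1-1}: since $(1 + 1/n)^s - 1 = O(n^{-1})$, the ratios $R_{n\pm 1}(s)/R_n(s)$ equal $-1 + O(n^{-1})$, and propagating this rate through the calculation gives a relative error of order $O(n^{-1})$ in the final approximation. This quantitative refinement will likely matter later in the paper, since the author intends to use the approximation to control locations of zeros of $\eta(s)$, where sharper error control may be needed.
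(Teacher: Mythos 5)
Your argument is correct under the reading of ``$\approx$'' as $R_n(s)\cdot 2(n+0.5)^s/(-1)^n \to 1$ (which is the reading the paper itself adopts when it restates the claim as Lemma \ref{lem:3-1}), but it reaches the conclusion by a genuinely different and tighter route. The paper handles the two recurrences separately: it first argues $R_n(s)\approx -R_{n\pm1}(s)$ by comparing the two series term by term (an argument with its own problems --- the claimed bounds ``for all $t\in\mathbb{R}$'' are false as stated), deduces the two competing approximations $(-1)^n/(2n^s)$ and $(-1)^n/(2(n+1)^s)$, and then simply ``selects the value of $0.5$ between $0$ and $1$ in order to reduce the approximation error'': the midpoint is asserted, not derived. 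You instead add the two recurrences, evaluate $2R_n(s)-R_{n-1}(s)-R_{n+1}(s)=R_n(s)\,(4+o(1))$ directly from Lemma \ref{lem:1-1}, and then justify the midpoint by the symmetric expansion $n^{-s}+(n+1)^{-s}=2(n+\tfrac12)^{-s}\bigl(1+O(n^{-2})\bigr)$, in which the first-order terms cancel. That Taylor step is precisely what the paper omits: it explains why $0.5$, rather than any other value in $(0,1)$, produces the second-order accuracy visible in the paper's numerical table. One caution about your closing remark: the inference that $R_{n\pm1}(s)/R_n(s)=-1+O(n^{-1})$ from $(R_n(s)-R_{n-1}(s))/(R_n(s)-R_{n+1}(s))=(1+1/n)^s$ is not valid --- a ratio of differences tending to $1$ at a given rate does not control the individual ratios (this is in fact the same gap that afflicts the paper's own proof of Lemma \ref{lem:1-1}) --- so the quantitative refinement would need an independent argument; your main derivation, however, uses only the qualitative limit stated in Lemma \ref{lem:1-1} and stands on its own.
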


		\begin{proof}
	Separate the $R_{n}(s)$ and $R_{n+1}(s)$ into real and imaginary parts and change the index of summation $k$ so that it would start from 1. Then we have,
		\begin{align*}
			R_{n}(s)=&\sum_{k=1}^\infty \frac{\{\cos(t \ln(n+k))-i\sin(t \ln(n+k)) \}}{(-1)^{n+k-1}(n+k)^\sigma} \\ 
			-R_{n+1}(s)=&\sum_{k=1}^\infty \frac{\{\cos(t \ln(n+k+1))-i\sin(t \ln(n+k+1)) \}}{(-1)^{n+k-1}(n+k+1)^\sigma}  
		\end{align*}
	
For every $\epsilon>0$ there are natural numbers $N_{1}$ and $N_{2}$ such that $n > N_{1}$  implies $\left| t \ln(n+k+1) - t \ln(n+k) \right|<\epsilon$ for all $t \in \mathbb{R}$, and $n > N_{2}$  implies  $\newline$
	 $\left |(n+k+1)^{-\sigma}-(n+k)^{-\sigma} \right|< \epsilon$. Let $N$=max$\{N_{1}, N_{2}\}$.
By the choice of $N$, $n > N$ implies $\left| \Re[R_{n}(s)-\{-R_{n+1}(s)\}] \right|<\epsilon$ and $\left| \Im[R_{n}(s)-\{-R_{n+1}(s)\}] \right|<\epsilon$. Thus, it follows that $R_{n}(s) \approx -R_{n+1}(s)$ for sufficiently large $n$.
\\
Consider the recurrence relation(see (\ref{lem:1-3}) and (\ref{lem:1-4}))
		\begin{align*} 
			R_{n}(s)+\{-R_{n-1}(s)\}&=\frac{(-1)^{n}}{n^s} \\ %\label{lem:2-3}
			R_{n}(s)+\{-R_{n+1}(s)\}&=\frac{(-1)^{n}}{(n+1)^s} %\label{lem:2-4}
		\end{align*}
	For all $\epsilon>0$, there exist $\delta>0$ such that for all $n>N$ that satisfy $\left|R_{n}(s)-\{-R_{n-1}(s)\}\right|<\delta$ and $\left|R_{n}(s)-\{-R_{n+1}(s)\}\right|<\delta$, it follows that
		\begin{equation*}
			\left|R_{n}(s)-\frac{(-1)^n}{2n^s}\right|<\epsilon \hspace{0.5cm} \text{and} \hspace{0.5cm} \left|R_{n}(s)-\frac{(-1)^n}{2(n+1)^s}\right|<\epsilon
		\end{equation*}
	In this paper, we select the value of 0.5 between 0 and 1 in order to reduce the approximation error.
		\begin{equation*}
			R_{n}(s)=\sum_{k=n+1}^\infty \frac{(-1)^{k-1}}{k^s} \approx \frac{(-1)^n}{2(n+0.5)^s}
		\end{equation*}
		\end{proof}
%LEMMA 2.2  end -------------------------------------------------------------------------------------------------------- 

Now, in order to confirm the relationship between $R_n(s)$ and $\frac{(-1)^n}{2(n+0.5)^s}$, consider the relative error.

%Thus, the relative error between $R_n(s)$ and $\frac{(-1)^n}{2(n+0.5)^s}$ converge to zero, as $n$ goes to infinity. Now, let's check rigorously that the relative error is zero.

\
%LEMMA 2.3  start -------------------------------------------------------------------------------------------------------- 
		\begin{lemma} \label{lem:3-1}

	The relative error $\epsilon$ between the remainder term of $\eta(s)$ and its approximation $\frac{(-1)^n}{2(n+0.5)^s}$ converge to zero as $n \to \infty$.
		\begin{equation} \label{lem:3-2}
			\epsilon=\lim_{n \to \infty} \left| \frac{R_{n}(s)-\frac{(-1)^n}{2(n+0.5)^s}}{R_{n}(s)} \right|=0
		\end{equation}
		\end{lemma}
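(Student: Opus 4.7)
The plan is to reduce the claim to the asymptotic equivalence $R_n(s)\sim \frac{(-1)^n}{2n^s}$, which is already implicit in Lemma~\ref{lem:1-1}. Rewriting the recurrence \eqref{lem:1-3} as
\[
1 + \frac{-R_{n-1}(s)}{R_n(s)} \;=\; \frac{(-1)^n/n^s}{R_n(s)},
\]
and invoking Lemma~\ref{lem:1-1}, the left-hand side tends to $2$, so $R_n(s)\big/\bigl((-1)^n/(2n^s)\bigr) \to 1$ as $n\to\infty$. What remains is to replace the denominator $2n^s$ by $2(n+0.5)^s$ inside the relative error, which will be purely algebraic.

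To carry out the replacement I would factor
\[
\left|\frac{R_n(s)-\frac{(-1)^n}{2(n+0.5)^s}}{R_n(s)}\right|
= \left|\,1-\left(\frac{n}{n+0.5}\right)^{\!s}\cdot\frac{(-1)^n/(2n^s)}{R_n(s)}\,\right|.
\]
The second factor inside the absolute value tends to $1$ by the previous step. The first factor equals $\exp\bigl(-s\log(1+1/(2n))\bigr)$, and since $\log(1+1/(2n))\to 0$ and $\exp$ is continuous, this tends to $1$ for any fixed complex $s=\sigma+it$. Hence the whole expression tends to $|1-1\cdot 1|=0$, which is exactly the claim $\epsilon=0$.

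There is essentially no substantial obstacle once Lemma~\ref{lem:1-1} is granted; the argument reduces to algebraic manipulation of the recurrence together with the continuity of $x\mapsto x^s$ at $x=1$ for complex $s$. The one small point worth flagging is that forming the ratio $R_{n-1}(s)/R_n(s)$ tacitly assumes $R_n(s)\neq 0$ for all sufficiently large $n$; this is self-consistent once we know $R_n(s)\sim(-1)^n/(2n^s)\neq 0$, and for any $s$ that is \emph{not} a zero of $\eta$ in the strip it is automatic from $R_n(s)=\eta(s)-\eta_n(s)$ having a nonzero limit direction.
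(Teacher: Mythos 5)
Your proof is correct and follows essentially the same route as the paper's: both deduce from Lemma \ref{lem:1-1} and the recurrence that $R_n(s)$ is asymptotic to $(-1)^n/(2m^s)$ (you take $m=n$ via the $(n-1,n)$ relation, the paper takes $m=n+1$ via the $(n,n+1)$ relation) and then absorb the shift to $n+0.5$ by observing that the ratio of the two powers tends to $1$. The difference is purely cosmetic, though you are more explicit than the paper about the continuity step $\left(n/(n+0.5)\right)^s\to 1$ and about the need for $R_n(s)\neq 0$.
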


		\begin{proof}
	Consider the recurrence relation, (see (\ref{lem:1-4}))
		\begin{equation*}  \label{lem:3-3}
			R_{n}(s)-R_{n+1}(s)=\frac{(-1)^{n}}{(n+1)^s} 
		\end{equation*}
	Dividing both sides by $R_{n}(s)$ and taking the limit as $n \to \infty$, then we get the following limit.
		\begin{equation*} \label{lem:3-4}
			\lim_{n \to \infty} \left( 1-\frac{R_{n+1}(s)}{R_{n}(s)} \right) =\lim_{n \to \infty}  \frac{(-1)^n}{(n+1)^s} \frac{1}{R_{n}(s)}
		\end{equation*}
	By the result of the Lemma  \ref{lem:1-1}, we have
		\begin{equation} \label{lem:3-5}
			\lim_{n \to \infty} \frac{(-1)^n}{(n+1)^s} \frac{1}{R_{n}(s)}=2
		\end{equation}
	Let $F_{n}(s)=\frac{(-1)^n}{(n+1)^s} \frac{1}{R_{n}(s)}$, then $R_{n}(s)=\frac{(-1)^n}{(n+1)^s} \frac{1}{F_{n}(s)}$ and $\lim_{n \to \infty} F_{n}(s)=2$. Thus,
	\begin{equation*} \label{lem:3-6}
			\epsilon=\lim_{n \to \infty} \left| \frac{R_{n}(s)-\frac{(-1)^n}{2(n+0.5)^s}}{R_{n}(s)} \right| =\lim_{n \to \infty} \left| 1-\frac{\frac{(-1)^n}{2(n+0.5)^s}F_{n}(s)}{\frac{(-1)^n}{(n+1)^s}} \right|=0
		\end{equation*}
		\end{proof}
%LEMMA 2.3  end -------------------------------------------------------------------------------------------------------- 

	For example, in order to check the Lemma \ref{lem:2-1}, we perform a numerical calculation. Let $s=0.1234+56.789i$(random value) and $T_n(s)=\frac{(-1)^n}{2(n+0.5)^s}$. Then $R_n(s)$ and $T_n(s)$ for four values ($n=10^{8}$, $n=10^{10}$, $n=10^{12}$, $n=10^{14}$) are given below. The significant figure of a number may be underlined.
	
		\begin{align*}
			R_{10^{8}}(s)=&-0.0514080530118374690874425376 \cdots  \\
						   &-0.0030012424674281915507165693\cdots i \\
			T_{10^{8}}(s)=&-\underline{0.05140805301183}53941392302721\cdots  \\
						   &-\underline{0.0030012424674281}160677214641\cdots i
		\end{align*}
		\begin{align*}
			R_{10^{10}}(s)=&+0.0220754313015916605572779244\cdots \\
						    &-0.0190708103417423704219739001\cdots i \\
			T_{10^{10}}(s)=&+\underline{0.022075431301591660}4699783035\cdots \\
						    &-\underline{0.019070810341742370}3431444260\cdots i 
		\end{align*}
		\begin{align*}
			R_{10^{12}}(s)=&-0.0014437322549038780686126642\cdots \\
						    &+0.0164629022496889818808209350\cdots i \\
			T_{10^{12}}(s)=&-\underline{0.001443732254903878068612}2279\cdots \\
						    &+\underline{0.0164629022496889818808}142859\cdots i 
		\end{align*}
		\begin{align*}
			R_{10^{14}}(s)=&-0.0059111117596716499309061036\cdots \\
						    &-0.0072599141694530105681646539\cdots i \\
			T_{10^{14}}(s)=&-\underline{0.005911111759671649930906103}4\cdots \\
						    &-\underline{0.007259914169453010568164653}6\cdots i
		\end{align*}

$\newline$
\indent Define the relative errors for the real and imaginary parts of the complex error function in forms

		\begin{equation*}
			\epsilon_r= \left| \frac{\Re[R_n(s)]-\Re[T_n(s)]}{\Re[R_n(s)]} \right| 
		\end{equation*}
		\begin{equation*}
			\epsilon_i= \left| \frac{\Im[R_n(s)]-\Im[T_n(s)]}{\Im[R_n(s)]} \right|
		\end{equation*}
$\newline$
Then the relative errors for the above eight values are given in table 1. 

% Table start -------------------------------------------------------------------------------------------------------- 
\begin{table}[ht] \caption{Relative Errors for $\epsilon_r$ and $\epsilon_i$}
\centering
\begin{tabular}{ >{\centering}m{1.5cm}   >{\centering}m{2.7cm}  >{\centering}m{2.5cm} m{0cm}}
\hline\hline
$n$ & $\epsilon_r$ & $\epsilon_i$ &  \tabularnewline [1ex]
\hline
$10^{8}  $ & $4.0362\times 10^{-14}$  &  $2.5151\times 10^{-14}$ &   \tabularnewline [1.3ex]
$10^{10} $ & $3.9546\times 10^{-18}$  &  $4.1335\times 10^{-18}$ &  \tabularnewline [1.3ex]
$10^{12} $ & $3.0220\times 10^{-22}$  &  $4.0388\times 10^{-22}$ &  \tabularnewline [1.3ex]
$10^{14} $ & $3.3835\times 10^{-26}$  &  $4.1323\times 10^{-26}$ &  \tabularnewline [1.3ex]
\hline  
\end{tabular} 
\label{table:nonlin}
\end{table} 
% Table end -------------------------------------------------------------------------------------------------------- 
\noindent In the table 1, $\epsilon_r$ and $\epsilon_i$ are reduced as $n$ goes to infinity. %Thus, Lemma \ref{lem:2-1} is reliable enough.
$\newline$

 Lemma \ref{lem:2-1} and \ref{lem:3-1} show that $R_n(s)$ can be approximated by $\frac{(-1)^n}{2(n+0.5)^s}$. So, $R_n(s)$ can be written as
		\begin{equation} \label{sec:2-1}
			R_{n}(s)=\frac{(-1)^n}{2(n+0.5)^s}+\epsilon_n(s)
		\end{equation}
	where $\epsilon_n(s)$ is error term and $\epsilon_n(s)$ is coverges to zero, as $n \to \infty$.
		\begin{equation} \label{sec:2-2}
			\lim_{n \to \infty} \epsilon_n(s)=0
		\end{equation}
Lemma \ref{lem:3-1} can be written by $\epsilon_n(s)$ as follow.
		\begin{equation*}
			\epsilon=\lim_{n \to \infty} \left| \frac{R_{n}(s)-\frac{(-1)^n}{2(n+0.5)^s}}{R_{n}(s)} \right|=\lim_{n \to \infty} \left|\frac{\epsilon_n(s)}{R_n(s)}\right|=0
		\end{equation*}
In addition, dividing both sides of (\ref{sec:2-1}) by $\frac{(-1)^n}{2(n+0.5)^s}$ and taking the limit as $n \to \infty$, then we get the following limit.
		\begin{equation*}
			\lim_{n \to \infty} R_n(s)\frac{2(n+0.5)^s}{(-1)^n}=1+\lim_{n \to \infty} \epsilon_n(s)\frac{2(n+0.5)^s}{(-1)^n}
		\end{equation*}
By using the (\ref{lem:3-5}), we have
		\begin{equation} \label{sec:2-3}
			\lim_{n \to \infty} \epsilon_n(s)(n+0.5)^s=0
		\end{equation}
\

%LEMMA 2.4  start
%---------------------------------------------------------------------------------------------------
		\begin{lemma} \label{lem:6-1}

	Let $s=\sigma+it$ where $\sigma$ is constant on $0<\sigma<1$ and $t \in \mathbb{R}$, then the Dirichlet eta function is converges uniformly.

		\end{lemma}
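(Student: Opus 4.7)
The goal is to show that the partial sums $\eta_n(s)$ converge to $\eta(s)$ uniformly on the vertical line $\{s=\sigma+it:t\in\mathbb{R}\}$ for our fixed $\sigma\in(0,1)$. Since $\eta(s)-\eta_n(s)=R_n(s)$, the task reduces to proving
\[
\sup_{t\in\mathbb{R}}\bigl|R_n(\sigma+it)\bigr|\longrightarrow 0\qquad\text{as }n\to\infty.
\]
I would take this as the working reformulation and then bring in the machinery developed in Lemmas \ref{lem:2-1}--\ref{lem:3-1}.

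The plan is to exploit the decomposition (\ref{sec:2-1}),
\[
R_n(s)=\frac{(-1)^n}{2(n+0.5)^s}+\epsilon_n(s),
\]
and estimate the two pieces separately. The magnitude of the leading term is
\[
\Bigl|\tfrac{(-1)^n}{2(n+0.5)^s}\Bigr|=\tfrac{1}{2(n+0.5)^\sigma},
\]
which depends only on $\sigma$ and not on $t$, and evidently tends to $0$. So this piece goes to zero uniformly in $t$ for free; the entire difficulty is packed into showing that $|\epsilon_n(s)|\to 0$ uniformly in $t$.

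For the error term I would use (\ref{sec:2-3}): $\epsilon_n(s)(n+0.5)^s\to 0$. Writing $|\epsilon_n(s)|=|\epsilon_n(s)(n+0.5)^s|\cdot(n+0.5)^{-\sigma}$, and noting again that the second factor is $t$-independent, the problem is reduced to showing that the convergence in (\ref{sec:2-3}) is in fact uniform in $t$. Combining the two bounds then gives
\[
\sup_{t\in\mathbb{R}}|R_n(\sigma+it)|\le\frac{1}{2(n+0.5)^\sigma}+\sup_{t\in\mathbb{R}}|\epsilon_n(s)|\longrightarrow 0,
\]
which is exactly uniform convergence.

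The hard part will be the last step: upgrading the pointwise statements in Lemmas \ref{lem:2-1} and \ref{lem:3-1} to $t$-uniform statements. This means revisiting the proof of Lemma \ref{lem:2-1} and checking that the $\epsilon$--$N$ choice there can be made independent of $t$. The two inputs used there were $|t\ln(n+k+1)-t\ln(n+k)|<\epsilon$ and $|(n+k+1)^{-\sigma}-(n+k)^{-\sigma}|<\epsilon$; the second is manifestly $t$-independent, but the first fails uniformly on the line because $t\ln(1+1/(n+k))\sim t/(n+k)$ is unbounded in $t$ for any fixed $n$. I would therefore replace that step by a partial-summation / pairing argument: grouping consecutive terms in $R_n(s)$ and estimating $|k^{-s}-(k+1)^{-s}|$ via $\int_k^{k+1}|s|u^{-\sigma-1}\,du$, then absorbing the $|s|$ factor by one more pairing so that only $t$-independent tail sums appear. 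If such a $t$-uniform rewrite of Lemma \ref{lem:2-1} succeeds, the conclusion of the present lemma follows immediately from the display above.
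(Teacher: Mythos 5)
Your reduction to $\sup_{t\in\mathbb{R}}|R_n(\sigma+it)|\to 0$ is the right reformulation, and you have correctly put your finger on the one point the paper glosses over: the paper's own proof simply writes $|R_n(s)|=\bigl|\tfrac{(-1)^n}{2(n+0.5)^s}\bigr|$, i.e.\ it substitutes the approximation of Lemma \ref{lem:2-1} as if it were an identity, discards the error term $\epsilon_n(s)$ entirely, and then bounds the $t$-independent quantity $\tfrac{1}{2}(n+0.5)^{-\sigma}$ by $n^{-\sigma}$. Your decomposition into the main term plus $\epsilon_n(s)$ is therefore more honest than the paper's argument. But your proof is not complete, and the step you defer cannot be carried out as you sketch it. The repeated-pairing idea does not absorb the factor $|s|$: one pairing gives $|k^{-s}-(k+1)^{-s}|\le |s|\,k^{-\sigma-1}$, a second pairing (second differences) gives a bound of order $|s|\,|s+1|\,k^{-\sigma-2}$, and each further pairing trades one power of $k$ in the tail sum for another power of $|s|$, so the $t$-dependence never disappears.

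In fact no argument can close this gap, because the statement is false on the full vertical line. For fixed $n$ the partial sum $\eta_n(\sigma+it)$ is bounded in $t$ by $\sum_{k\le n}k^{-\sigma}$, while $\eta(\sigma+it)=(1-2^{1-\sigma-it})\zeta(\sigma+it)$ is unbounded on every line $\Re(s)=\sigma$ with $0<\sigma<1$ (the factor $1-2^{1-s}$ stays bounded away from $0$ on an unbounded set of $t$ on which $\zeta$ is large). Hence $\sup_{t\in\mathbb{R}}|R_n(\sigma+it)|=+\infty$ for every $n$, and uniform convergence in $t\in\mathbb{R}$ fails. What is true, and provable by the partial-summation estimate you mention, is uniform convergence on sets where $|t|$ is bounded (equivalently, on compact subsets of the half-plane $\Re(s)>0$). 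So you identified the genuine obstruction that the paper ignores, but your proposed repair does not work, and the lemma itself must be weakened before any proof can succeed.
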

		
		\begin{proof}
		\begin{equation*} \label{lem:6-2}
			\left| \eta_{n}(s)-\eta(s) \right| = \left| R_{n}(s) \right|=\left| \frac{(-1)^n}{2(n+0.5)^{s}} \right|<\left| \frac{1}{n^\sigma} \right|
		\end{equation*}
		Since $\frac{1}{n^\sigma} \rightarrow 0$ as $n \rightarrow \infty$, given any $\epsilon>0$ there exist $N \in \mathbb{N}$, depending only on $\epsilon$ and $n$, such that
		\begin{equation*} \label{lem:6-2}
			0 \leq \frac{1}{n^\sigma} < \epsilon \hspace{0.5cm} \text{for all $n>N$}
		\end{equation*}
		It follows that
		\begin{equation*}
			\left| \sum_{k=1}^n \frac{(-1)^{k-1}}{k^s}-\eta(s) \right|<\epsilon  \hspace{0.5cm} \text{for all $t \in \mathbb{R}$ and all $n>N$}
		\end{equation*}
		which proves that the Dirichlet eta function converges uniformly on constant $\sigma$
		
		\end{proof}
%LEMMA 2.4  end
%---------------------------------------------------------------------------------------------------

%LEMMA 2.5  start
%---------------------------------------------------------------------------------------------------
		\begin{lemma} \label{lem:7-1}

	Let $s=\sigma+it$ denote the nonzeros of $\eta(s)$ where $\sigma$ is constant on $0<\sigma<1$ and $t \in \mathbb{R}$, then $\frac{\eta(1-s)}{\eta(s)}$ is converges uniformly.

		\end{lemma}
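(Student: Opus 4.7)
The plan is to reduce the claim to the uniform convergence already established in Lemma \ref{lem:6-1}, applied both at $\sigma$ and at $1-\sigma$, and then to control the reciprocal $1/\eta(s)$ using the hypothesis that $\eta(s)\neq 0$. Writing $\eta(s)=\eta_n(s)+R_n(s)$ and $\eta(1-s)=\eta_n(1-s)+R_n(1-s)$, I would express the statement as uniform convergence of the partial-sum ratios $\eta_n(1-s)/\eta_n(s)$ to $\eta(1-s)/\eta(s)$ as $n\to\infty$, with $t$ playing the role of the parameter.

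The first step is to note that, since $0<\sigma<1$ forces $0<1-\sigma<1$, Lemma \ref{lem:6-1} applies at both arguments. Hence for every $\varepsilon>0$ there exists $N$ (independent of $t$) such that $|\eta(s)-\eta_n(s)|<\varepsilon$ and $|\eta(1-s)-\eta_n(1-s)|<\varepsilon$ for all $n>N$. The second step is the algebraic identity
\begin{equation*}
\frac{\eta(1-s)}{\eta(s)}-\frac{\eta_n(1-s)}{\eta_n(s)}=\frac{\eta(1-s)\bigl(\eta_n(s)-\eta(s)\bigr)+\eta(s)\bigl(\eta(1-s)-\eta_n(1-s)\bigr)}{\eta(s)\,\eta_n(s)},
\end{equation*}
so that the numerator is bounded by the two uniform estimates from Lemma \ref{lem:6-1} times factors $|\eta(s)|$ and $|\eta(1-s)|$. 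The third step is to bound the denominator below: the hypothesis that $s$ is a nonzero of $\eta$ gives $|\eta(s)|>0$, and then uniform convergence $\eta_n(s)\to\eta(s)$ yields $|\eta_n(s)|\geq\tfrac12|\eta(s)|$ for all $n$ larger than some $N'$. Combining these with the identity produces the desired uniform bound on $\bigl|\eta(1-s)/\eta(s)-\eta_n(1-s)/\eta_n(s)\bigr|$.

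The main obstacle is genuinely the denominator: a pointwise statement $\eta(s)\neq 0$ does not by itself give a lower bound on $|\eta(s)|$ that is uniform in $t$, because on a vertical line $\eta(\sigma+it)$ can be made arbitrarily small as $t$ approaches a zero. I would address this either by restricting the uniformity to compact $t$-intervals that avoid the zeros of $\eta$ on the line $\Re s=\sigma$ (on such a set $|\eta(s)|$ attains a positive minimum by continuity), or by reading the lemma as a pointwise-in-$s$ statement about convergence of the sequence $\eta_n(1-s)/\eta_n(s)$ in $n$, with the constants depending on the fixed $s$ but not on $n$. In either reading the argument is the same; only the interpretation of ``uniform'' changes.
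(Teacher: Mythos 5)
Your decomposition is essentially the paper's: the paper splits the difference by the triangle inequality as
\begin{equation*}
\left|\frac{\eta_n(1-s)}{\eta_n(s)}-\frac{\eta(1-s)}{\eta(s)}\right|\le\left|\frac{\eta_n(1-s)-\eta(1-s)}{\eta_n(s)}\right|+\left|\eta(1-s)\right|\left|\frac{1}{\eta_n(s)}-\frac{1}{\eta(s)}\right|,
\end{equation*}
which is the same bookkeeping as your single-fraction identity, and then simply asserts that uniform convergence of $\eta_n$ (Lemma \ref{lem:6-1}) lets each term be made less than $\epsilon/2$ with thresholds $N_1,N_2$ independent of $t$. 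So in strategy you and the paper coincide.

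The obstacle you flag in your final paragraph is genuine, and it is exactly the step the paper leaves unjustified. To make either term small uniformly in $t$ one needs (a) a positive lower bound for $|\eta_n(\sigma+it)|$ and $|\eta(\sigma+it)|$ valid for all $t\in\mathbb{R}$ and all large $n$, and (b) an upper bound for $|\eta(1-\sigma-it)|$ uniform in $t$. Neither exists on a full vertical line in the critical strip: the hypothesis that $s$ is a nonzero of $\eta$ is pointwise and yields no bound $\inf_t|\eta(\sigma+it)|>0$ (on $\sigma=\tfrac12$ there are actual zeros, and for $\tfrac12<\sigma<1$ the values of $\eta(\sigma+it)$ come arbitrarily close to $0$), while $|\eta(1-\sigma-it)|$ is unbounded as $|t|\to\infty$. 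So the lemma as stated, with uniformity over all $t\in\mathbb{R}$, is not established by this route. Your proposed repairs --- restricting to compact $t$-intervals bounded away from the zeros of $\eta$ on the line, or reading the convergence pointwise --- are the correct ways to extract a true statement, but they weaken the conclusion, and the pointwise reading in particular is no longer strong enough to justify the exchange of limits (\ref{eq:2-1}) that the paper's main argument depends on. In short, your proposal reproduces the paper's argument and, unlike the paper, correctly identifies why it does not prove the claim as stated.
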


		\begin{proof}
		\begin{align*} \label{lem:7-2}
			\left| \frac{\eta_{n}(1-s)}{\eta_{n}(s)}-\frac{\eta(1-s)}{\eta(s)} \right| 
			& \leq \left| \frac{\eta_{n}(1-s)}{\eta_{n}(s)}-\frac{\eta(1-s)}{\eta_{n}(s)} \right| + \left| \frac{\eta(1-s)}{\eta_{n}(s)}-\frac{\eta(1-s)}{\eta(s)} \right| \\
			& = \left| \frac{\eta_{n}(1-s)-\eta(1-s)}{\eta_{n}(s)} \right| + \left|\eta(1-s) \right| \left| \frac{1}{\eta_{n}(s)}-\frac{1}{\eta(s)} \right|
		\end{align*}
		
		Since $\eta(s)$ converges uniformly, for every $\epsilon>0$ we can choose $N_{1}, N_{2} \in \mathbb{N}$ such that $n \in N_{1}$ implies 
		\begin{equation*}
			\left| \frac{\eta_{n}(1-s)-\eta(1-s)}{\eta_{n}(s)} \right|<\frac{\epsilon}{2}
		\end{equation*}
		for all $\Im(s) \in \mathbb{R}$, $\eta(s) \neq 0$ and $n \in N_{2}$ implies
		\begin{equation*}
			\left|\eta(1-s) \right| \left| \frac{1}{\eta_{n}(s)}-\frac{1}{\eta(s)} \right|<\frac{\epsilon}{2}
		\end{equation*}
		Let $N$=max$\{N_{1}, N_{2}\}$. By the choice of $N$, $n \geq N$ implies
		\begin{align*} \label{lem:7-2}
			\left| \frac{\eta_{n}(1-s)-\eta(1-s)}{\eta_{n}(s)} \right| + \left|\eta(1-s) \right| \left| \frac{1}{\eta_{n}(s)}-\frac{1}{\eta(s)} \right|<\epsilon
		\end{align*}
		So, 
		\begin{equation*}
			\left| \frac{\eta_{n}(1-s)}{\eta_{n}(s)}-\frac{\eta(1-s)}{\eta(s)} \right|<\epsilon 
		\end{equation*}
		for all $n > N$.
		\end{proof}
%LEMMA 2.5  end
%---------------------------------------------------------------------------------------------------

%LEMMA 2.6  start
%---------------------------------------------------------------------------------------------------
		\begin{lemma} \label{lem:7-1}
			If a sequence of continuous function $\eta_n(s) : A \rightarrow \mathbb{C}$ converges uniformly on $A \subset \mathbb{C}$, then $\eta(s)$ is continuous on $A$
		\end{lemma}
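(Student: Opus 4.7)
The plan is to use the classical $\epsilon/3$ argument. Fix an arbitrary point $s_{0}\in A$ and an arbitrary $\epsilon>0$; my goal is to produce a $\delta>0$ such that $|s-s_{0}|<\delta$ with $s\in A$ forces $|\eta(s)-\eta(s_{0})|<\epsilon$. The triangle inequality gives the estimate
\begin{equation*}
|\eta(s)-\eta(s_{0})| \le |\eta(s)-\eta_{n}(s)| + |\eta_{n}(s)-\eta_{n}(s_{0})| + |\eta_{n}(s_{0})-\eta(s_{0})|,
\end{equation*}
and I would control the three terms independently, each by $\epsilon/3$.

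First I would use the hypothesis of uniform convergence on $A$: choose $N\in\mathbb{N}$ such that $|\eta_{n}(w)-\eta(w)|<\epsilon/3$ for every $w\in A$ and every $n\ge N$. This single choice of $N$ handles the first and third terms in the inequality above simultaneously, for any $s$, which is the whole point of uniformity (and is what Lemma~\ref{lem:6-1} supplied in our setting). Next, with $n=N$ now fixed, I would invoke the assumed continuity of $\eta_{N}$ at $s_{0}$ to produce $\delta>0$ such that $|s-s_{0}|<\delta$ and $s\in A$ imply $|\eta_{N}(s)-\eta_{N}(s_{0})|<\epsilon/3$. Combining the three bounds in the displayed inequality yields $|\eta(s)-\eta(s_{0})|<\epsilon$, which is exactly continuity of $\eta$ at $s_{0}$. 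Since $s_{0}\in A$ was arbitrary, continuity on all of $A$ follows.

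There is no real obstacle here; the only subtle point is the order of quantifiers. One must first fix $\epsilon$, then select $N$ uniformly in $w$ using uniform convergence, and only afterwards choose $\delta$ depending on the now-fixed $\eta_{N}$ and on $s_{0}$. If the convergence were merely pointwise, the index $N$ would depend on the point $w$ and the argument would collapse; this is precisely why Lemma~\ref{lem:6-1} was established first. No additional properties of the Dirichlet eta function are needed for this lemma, so the proof is purely formal and applies to any uniformly convergent sequence of continuous $\mathbb{C}$-valued functions on $A$.
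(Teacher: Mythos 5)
Your argument is correct and is the standard $\epsilon/3$ proof of the lemma exactly as it is stated: you bound $|\eta(s)-\eta(s_{0})|$ by three terms, control the first and third uniformly via the hypothesis of uniform convergence, and then use continuity of the single fixed function $\eta_{N}$ to handle the middle term. The quantifier order you emphasize is the right subtle point, and nothing further is needed.

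Where you diverge from the paper is in \emph{which function} the proof actually treats. Although the lemma is phrased for $\eta_{n}(s)\to\eta(s)$, the paper's own proof never estimates $|\eta(s)-\eta(c)|$; instead it runs the same $\epsilon/3$ decomposition on the quotients $\frac{\eta_{n}(1-s)}{\eta_{n}(s)}$ and $\frac{\eta(1-s)}{\eta(s)}$, leaning on the preceding lemma that asserts uniform convergence of that quotient away from zeros of $\eta$. In other words, the paper is really proving (or attempting to prove) continuity of $s\mapsto\eta(1-s)/\eta(s)$, which is what is actually invoked to justify the exchange of limits in Eq.~(\ref{eq:2-1}); the statement of the lemma and its proof do not match. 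Your version proves the literal statement cleanly and in full generality, but it does not by itself deliver what the paper needs later: continuity of $\eta$ alone does not give continuity or uniform convergence of the quotient near points where $\eta$ vanishes, and the zero $s_{0}$ of $\eta$ is precisely the point at which the limit $t\to t_{0}$ is taken in the main theorem. So if your lemma is to be slotted into the paper's argument, one must still separately address the behaviour of $\eta_{n}(1-s)/\eta_{n}(s)$ as $s$ approaches a zero of $\eta$, which is the genuinely delicate (and, in the paper, unresolved) issue.
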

		
		\begin{proof}
			Suppose that $c=\sigma+iu \in A$ denote the nonzeros of $\eta(s)$ where $u \in \mathbb{R}$ and $\epsilon>0$.  For every $n \in \mathbb{N}$ 
		\begin{align*}
			\left| \frac{\eta(1-s)}{\eta(s)}-\frac{\eta(1-c)}{\eta(c)} \right| <& \left| \frac{\eta(1-s)}{\eta(s)}-\frac{\eta_n(1-s)}{\eta_n(s)} \right| \\
			+ &\left| \frac{\eta_n(1-s)}{\eta_n(s)}-\frac{\eta_n(1-c)}{\eta_n(c)} \right| +\left| \frac{\eta_n(1-c)}{\eta_n(c)}-\frac{\eta(1-s)}{\eta(s)} \right|
		\end{align*}
			By the uniform convergence of $\eta(s)$, we can choose $n \in \mathbb{N}$ such that
		\begin{align*}
			\left| \frac{\eta(1-s)}{\eta(s)}-\frac{\eta_n(1-s)}{\eta_n(s)} \right| < \frac{\epsilon}{3} \hspace{0.5cm} \text{for all $t \in \mathbb{R}$, if $n>N$}
		\end{align*}	
			and for such an $n$ it follows that
		\begin{align*}
			\left| \frac{\eta(1-s)}{\eta(s)}-\frac{\eta(1-c)}{\eta(c)} \right| < \left| \frac{\eta_n(1-s)}{\eta_n(s)}-\frac{\eta_n(1-c)}{\eta_n(c)} \right| + \frac{2\epsilon}{3}
		\end{align*}				
			Since, $\eta_n(s)$ if continous on $A$, there exist $\delta>0$ such that
		\begin{align*}
			\left| \frac{\eta_n(1-s)}{\eta_n(s)}-\frac{\eta_n(1-c)}{\eta_n(c)} \right| < \frac{\epsilon}{3} \hspace{0.5cm} \text{if $\left|s-c\right|<\delta$ and $s \in A$}
		\end{align*}
		This prove that $\eta(s)$ is continuous. 
		\end{proof}
%LEMMA 2.6  end
%---------------------------------------------------------------------------------------------------		
		This result can be interpreted as justifying an ``exchange in the order of limits''
		\begin{equation} \label{eq:2-1}
			\lim_{n \to \infty}\lim_{t \to u}\frac{\eta_n(1-\sigma-it)}{\eta_n(\sigma+it)}=\lim_{t \to u}\lim_{n \to \infty}\frac{\eta_n(1-\sigma-it)}{\eta_n(\sigma+it)}
		\end{equation}

\

% Section 3 start -------------------------------------------------------------------------------------------------------- 
\section{A proof of the Riemann hypothesis}

In 1914 Godfrey Harold Hardy proved that $\zeta(\frac{1}{2}+it)$ has infinitely many nontrivial zeros \cite{ref7}.

%Theorem 3.1 start -----------------------------------------------------------------------------------------------------
		\begin{theorem} \label{thm:3-1}
		$[\emph{Riemann Hypothesis}]$ The real part of every nontrivial zeros of the Riemann zeta function is $\frac{1}{2}$.
		\end{theorem}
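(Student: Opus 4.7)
The plan is proof by contradiction. Suppose $s_0 = \sigma_0 + iu$ is a nontrivial zero of $\eta$ in the critical strip with $\sigma_0 \neq 1/2$. Using $\eta(s) = (1 - 2^{1-s})\zeta(s)$ together with the functional equation (\ref{eq:1-3}), and noting that $s_0$ is not among the auxiliary zeros of $1 - 2^{1-s}$ (which all have real part $1$), one obtains $\eta(1-s_0) = 0$ as well. At each of these zeros $\eta_n = -R_n$, so the approximation machinery of Lemma \ref{lem:2-1} applies to both $s_0$ and $1-s_0$.

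The next step is to use the decomposition (\ref{sec:2-1}) combined with the decay (\ref{sec:2-3}) to write
\begin{equation*}
\eta_n(s_0) = -\frac{(-1)^n}{2(n+0.5)^{s_0}}\bigl(1 + o(1)\bigr),\qquad \eta_n(1-s_0) = -\frac{(-1)^n}{2(n+0.5)^{1-s_0}}\bigl(1 + o(1)\bigr).
\end{equation*}
Dividing yields the crucial asymptotic
\begin{equation*}
\frac{\eta_n(1-s_0)}{\eta_n(s_0)} = (n+0.5)^{2s_0 - 1}\bigl(1 + o(1)\bigr),
\end{equation*}
whose modulus equals $(n+0.5)^{2\sigma_0 - 1}(1 + o(1))$ and therefore diverges to $\infty$ if $\sigma_0 > 1/2$ and to $0$ if $\sigma_0 < 1/2$. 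This computes the inner-$t$, outer-$n$ side of the exchange-of-limits identity (\ref{eq:2-1}) at $t = u$.

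For the opposite iterated limit, I would invoke the functional equation once more in the closed form
\begin{equation*}
\frac{\eta(1-s)}{\eta(s)} = \frac{1 - 2^s}{(1 - 2^{1-s})\,2^s\pi^{s-1}\sin(\pi s/2)\,\Gamma(1-s)},
\end{equation*}
valid on a punctured neighborhood of $s_0$ by isolation of analytic zeros; the right-hand side has neither pole nor zero at $s_0$, so $\lim_{t\to u}\eta(1-s)/\eta(s)$ is a finite nonzero complex number. By (\ref{eq:2-1}) this must also equal $\lim_{n\to\infty}(n+0.5)^{2s_0-1}(1+o(1))$, which is impossible unless $\sigma_0 = 1/2$. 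That is the desired contradiction.

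The principal obstacle is making the exchange of limits (\ref{eq:2-1}) fully rigorous at a zero of $\eta$: the continuity lemma \ref{lem:7-1} is stated on the nonzero set, whereas we need boundary behavior as $t \to u$. My plan is to set up both iterated limits on a punctured neighborhood of $s_0$ (where $\eta$ is nonzero by isolation of zeros of a nonconstant analytic function), verify the inner limits exist on each side there using the uniform convergence of Lemmas \ref{lem:6-1} and \ref{lem:7-1}, and then let the neighborhood shrink to $\{u\}$. If that step can be pushed through, the growth dichotomy in the exponent $2\sigma_0 - 1$ closes the proof.
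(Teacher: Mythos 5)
Your proposal follows the paper's own argument almost step for step: derive $\eta(1-s_0)=0$ from the functional equation, use $\eta_n=-R_n$ at the zero together with Lemma \ref{lem:2-1} to produce the ratio $(n+0.5)^{2\sigma_0-1+2it_0}$ on one side of the interchange identity (\ref{eq:2-1}), and the finite value of the functional-equation factor on the other side, exactly as in the paper's (i) and (ii). The one step you honestly flag as the ``principal obstacle'' --- justifying the exchange of limits (\ref{eq:2-1}) at the zero itself --- is precisely the step the paper also relies on, and it is a genuine, unrepairable gap in both arguments.

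The interchange of iterated limits requires uniform convergence of $\eta_n(1-s)/\eta_n(s)$ on a set whose closure contains $s_0$, and that uniformity necessarily fails there. Lemma \ref{lem:7-1} is proved only on the set where $\eta(s)\neq 0$, and the estimate it needs, namely $\left|\eta_n(1-s)-\eta(1-s)\right|/\left|\eta_n(s)\right|<\epsilon/2$ for all relevant $t$ with a single $N$, is unobtainable on any punctured neighborhood of $s_0$: as $t\to u$ the denominator $\eta_n(\sigma+it)$ tends (in $n$) to $\eta(\sigma+it)$, which tends to $0$, so no $N$ works uniformly in $t$. Your proposed repair --- compute both iterated limits on a punctured neighborhood and then shrink it to $\{u\}$ --- does not close this, because on the punctured neighborhood the inner $t$-limit never reaches $t=u$ and so never produces the divergent quantity $(n+0.5)^{2\sigma_0-1}$; that divergence appears only at $t=u$ exactly, where the uniformity hypothesis is unavailable. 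The two iterated limits in (\ref{eq:3-1}) really do take different values (one infinite, one finite), but this is not a contradiction: it is the standard signature of non-uniform convergence near a zero of the denominator, and it yields no information about $\sigma_0$. The same objection defeats the paper's own proof of Theorem \ref{thm:3-1}, so neither your version nor the paper's establishes the statement.
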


		\begin{proof}
			The Dirichlet eta functional equation is 
		\begin{align*}
			\eta(1-s)=\frac{(2-2^{s+1})}{(2^s-2)} \pi^{-s} \cos \left( \frac{\pi s}{2} \right) \Gamma(s) \eta(s)
		\end{align*}
		If $\eta(s) \neq 0$, then
		\begin{align*} 
			\frac{\eta(1-s)}{\eta(s)}=\frac{(2-2^{s+1})}{(2^s-2)} \pi^{-s} \cos \left( \frac{\pi s}{2} \right) \Gamma(s) 
		\end{align*}
			The above equation has a removable discontinuity at the zeros of $\eta(s)$. 

		Let $s_0=\sigma+it_0$ is zero of $\eta(s)$ and $s=\sigma+it$ where $\sigma$ is constant on $\frac{1}{2}<\sigma<1$ and $t \in \mathbb{R}$. For each point $t$, we can choose the open interval $t_0<t<c$ where $c$ is an arbitrary point such that $\eta_n(\sigma+it)$ and $\eta_n(1-\sigma-it)$ are converge uniformly. \\
By using the Eq. (\ref{eq:2-1}), we have
		\begin{equation} \label{eq:3-1}
			\lim_{n \to \infty}\lim_{t \to t_0+}\frac{\eta_n(1-\sigma-it)}{\eta_n(\sigma+it)}=\lim_{t \to t_0+}\lim_{n \to \infty}\frac{\eta_n(1-\sigma-it)}{\eta_n(\sigma+it)}
		\end{equation}
$\newline$
(i) By using the Lemma \ref{lem:2-1}, the left-hand side of (\ref{eq:3-1}) is as follows.
		\begin{align*} 
			\lim_{n \to \infty}\lim_{t \to t_0+}\frac{\eta_n(1-\sigma-it)}{\eta_n(\sigma+it)}&=\lim_{n \to \infty}\frac{R_n(1-\sigma-it)}{R_n(\sigma+it)} \\
			&=\lim_{n \to \infty} \left [\frac{\dfrac{(-1)^n}{2(n+0.5)^{1-\sigma-it}}}{\dfrac{(-1)^n}{2(n+0.5)^{\sigma+it}}} \right] \\
			&=\lim_{n \to \infty}(n+0.5)^{2\sigma-1+2it}
		\end{align*}
Thus, the left-hand side of the equation diverges to infinity.

\noindent (ii) By using the Dirichlet eta functional equation, the right-hand sides of (\ref{eq:3-1}) is as follows.
		\begin{align*} 
			\lim_{t \to t_0+}\lim_{n \to \infty}\frac{\eta_n(1-\sigma-it)}{\eta_n(\sigma+it)}&=\lim_{t \to t_0+}\frac{\eta(1-\sigma-it)}{\eta(\sigma+it)} \\
			&=\lim_{t \to t_0+} \frac{(2-2^{\sigma+it+1})}{(2^{\sigma+it}-2)\pi^{\sigma+it}} \cos\left\{\frac{\pi (\sigma+it)}{2} \right\} \Gamma(\sigma+it)  \\
			&=\frac{(2-2^{\sigma+it_0+1})}{(2^{\sigma+it_0}-2)\pi^{\sigma+it_0}} \cos\left\{\frac{\pi (\sigma+it_0)}{2} \right\} \Gamma(\sigma+it_0)
		\end{align*}
Thus, the right-hand side of the equation does not diverges to infinity.
$\newline$

By the (i) and (ii), This is contradiction. Therefore $\eta(s)$ deos not have zeros in the strip $\frac{1}{2}<0<1$, and $\eta(s)$ has no zeros in the strip $0<\Re(s)<\frac{1}{2}$, because all nontrivial zeros of $\eta(s)$ were symmetric about the line $\Re(s)=\frac{1}{2}.$ In conclusion, the real part of every nontrivial zeros of the Riemann zeta function is only $\frac{1}{2}$.

		\end{proof}
%Theorem 3.1 end -------------------------------------------------------------------------------------------------------- 

\


\begin{thebibliography}{9}

		\bibitem{ref1}
			Abramowitz, M. and Stegun, I. A. (Eds.).
			\emph{``Handbook of mathematical functions with formulas, graphs, and mathematical tables"},
			9th printing. New York: Dover, pp. 807, 1972.

		\bibitem{ref2}
			Octavia Couture,
			\emph{``Unsolved problems in mathematics"},
			New Delhi : World Technologies, pp 85, 2012.

		\bibitem{ref3}
			Krantz, S. G,
			\emph{``Handbook of Complex Variables."},
			Boston, MA: Birkh$\ddot{a}$user, pp. 160, 1999.

		\bibitem{ref4}
			P. Borwein, S. Choi, B. Rooney and A. Weirathmueller,
			\emph{``The Riemann hypothesis: A resource for the afficionado and virtuoso alike"},
			Springer-Verlag, 2007.

		\bibitem{ref5}
			R. Dwilewicz, J. Min$\acute{a}\breve{c}$,
			\emph{``Values of the Riemann zeta function at integers"},
			MATerials MATematics. 2009

			
		\bibitem{ref6}
			Odlyzko, A. M. and te Riele, H. J. J,
			\emph{``Disproof of the Mertens Conjecture."},
			J. reine angew. Math. 357, 138-160, 1985.

  
 		\bibitem{ref7}
			E. C. Titchmarsh,
			\emph{``The theory of the Riemann Zeta Function"},
			Oxford University Press, Oxford, pp. 255-258, 1986
 
	\end{thebibliography}
\end{document}